\theoremstyle{plain}
\newtheorem{theorem}{Theorem}[section]
\newtheorem{lemma}[theorem]{Lemma}
\newtheorem{definition}[theorem]{definition}
\newcommand{\C}{\mathbb{C}}
\newcommand{\fig}[5]{
\begin{figure}[ht]
\begin{center}
\resizebox{#1}{#2}{\includegraphics{#3}}
\end{center}
\caption{#4}
\label{#5}
\end{figure}
}
\title{%
An $f$-chromatic spanning forest of edge-colored complete bipartite graphs%
}
\author{%
Kazuhiro Suzuki%
\footnote{This work was supported by MEXT. KAKENHI 21740085.}
\footnote{Department of Electronics and Informatics Frontier,
Kanagawa University, Yokohama, Kanagawa, 221-8686 Japan.
kazuhiro@tutetuti.jp.}
}
\date{\empty}
\begin{document}
\maketitle

\begin{abstract}
In 2001, Brualdi and Hollingsworth proved that
an edge-colored balanced complete bipartite graph $K_{n,n}$
with a color set $\C$ $=$ $\{1,2,3,$ $\ldots , 2n-1 \}$
has a heterochromatic spanning tree
if the number of edges colored with colors in $R$
is more than $|R|^2 /4$
for any non-empty subset $R \subseteq \C$,
where a heterochromatic spanning tree
is a spanning tree whose edges have distinct colors,
namely, any color appears at most once.
In 2010, Suzuki generalized heterochromatic graphs
to $f$-chromatic graphs,
where any color $c$ appears at most $f(c)$.
Moreover, he presented a necessary and sufficient condition
for graphs to have an $f$-chromatic spanning forest
with exactly $w$ components.
In this paper,
using this necessary and sufficient condition,
we generalize the Brualdi-Hollingsworth theorem above.
\\[6pt]
{\bf Keyword(s):}
$f$-chromatic,
heterochromatic,
rainbow,
multicolored,
totally multicolored,
polychromatic,
colorful,
edge-coloring,
spanning tree,
spanning forest.
\\[6pt]
{\bf MSC2010:}
05C05\footnote{05C05 Trees.},
05C15\footnote{05C15 Coloring of graphs and hypergraphs.}.
\end{abstract}

\section{Introduction}

We consider finite undirected graphs without loops or multiple edges.
For a graph $G$, we denote by $V(G)$ and $E(G)$
its vertex and edge sets, respectively.
An \textit{edge-coloring} of a graph $G$
is a mapping $color:E(G) \rightarrow \C$,
where $\C$ is a set of colors.
An \textit{edge-colored graph} $(G, \C, color)$
is a graph $G$ with an edge-coloring $color$ on a color set $\C$.
We often abbreviate an edge-colored graph $(G, \C, color)$ as $G$.

An edge-colored graph $G$ is said to be \textit{heterochromatic}
if no two edges of $G$ have the same color,
that is,
$color(e_i) \ne color(e_j)$ for any two distinct edges $e_i$ and $e_j$ of $G$.
A heterochromatic graph is also said to be 
\textit{rainbow}, \textit{multicolored}, \textit{totally multicolored},
\textit{polychromatic}, or \textit{colorful}.
Heterochromatic subgraphs of edge-colored graphs
have been studied in many papers.
(See the survey by Kano and Li \cite{KaLi08}.)

Akbari \& Alipour \cite{AkAl06}, and Suzuki \cite{Su06}
independently presented a necessary and sufficient condition
for edge-colored graphs to have a heterochromatic spanning tree,
and they proved some results by applying the condition.
Here, we denote by $\omega(G)$ the number of components of a graph $G$.
Given an edge-colored graph $G$ and a color set $R$,
we define $E_{R}(G) = \{ e \in E(G) ~|~ color(e) \in R \}$.
Similarly, for a color $c$,
we define $E_c(G) = E_{\{c\}}(G)$.
We denote the graph $(V(G), E(G) \setminus E_R(G))$ by $G-E_R(G)$.

\begin{theorem}[Akbari and Alipour, (2006) \cite{AkAl06}, Suzuki, (2006) \cite{Su06}]
An edge-colored graph $G$
has a heterochromatic spanning tree
if and only if
\begin{equation*}
\omega(G-E_R(G)) \le |R|+1 \text{~~~~~ for any } R \subseteq \C.
\end{equation*}
\label{thm-Su06-1}
\end{theorem}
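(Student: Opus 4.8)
The plan is to handle the two directions separately, with the forward (necessity) direction being a short counting argument and the backward (sufficiency) direction being the substantive part, which I would attack with matroid intersection. Throughout, write $n = |V(G)|$, and note first that applying the hypothesis with $R = \emptyset$ gives $\omega(G) \le 1$, so $G$ is connected and any spanning tree has exactly $n-1$ edges.

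For necessity, suppose $G$ has a heterochromatic spanning tree $T$ and fix any $R \subseteq \C$. Since $T$ uses each color at most once, $|E_R(T)| \le |R|$, so the forest $T - E_R(T)$ on $n$ vertices retains at least $n-1-|R|$ edges and hence splits into at most $|R|+1$ components. Because $T - E_R(T)$ is a spanning subgraph of $G - E_R(G)$, adding the remaining edges of $G - E_R(G)$ can only merge components, so $\omega(G-E_R(G)) \le \omega(T - E_R(T)) \le |R|+1$.

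For sufficiency I would set up two matroids on the ground set $E(G)$: the graphic (cycle) matroid $M_1$, whose rank function is $r_1(F) = n - \omega((V(G),F))$, and the partition matroid $M_2$ with one block per color, whose rank $r_2(F)$ equals the number of distinct colors appearing in $F$. A common independent set is exactly a heterochromatic forest, so a common independent set of size $n-1$ is precisely a heterochromatic spanning tree. By the matroid intersection theorem the maximum size of a common independent set equals $\min_{F \subseteq E(G)} \big( r_1(F) + r_2(E(G) \setminus F) \big)$, and since this minimum is at most $r_1(E(G)) + r_2(\emptyset) = n-1$, it suffices to prove $r_1(F) + r_2(E(G)\setminus F) \ge n-1$ for every $F$. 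Given $F$, let $R$ be the set of colors appearing in $E(G) \setminus F$, so that $r_2(E(G)\setminus F) = |R|$; every edge whose color lies outside $R$ must sit in $F$, which forces $G - E_R(G)$ to be a spanning subgraph of $(V(G), F)$ and hence $\omega((V(G),F)) \le \omega(G-E_R(G)) \le |R|+1$. Substituting, $r_1(F) = n - \omega((V(G),F)) \ge n - |R| - 1$, so $r_1(F) + r_2(E(G)\setminus F) \ge n-1$, as needed.

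I expect the main obstacle to be the translation step in the sufficiency direction: choosing, for an arbitrary $F$, the right color set $R$ and verifying that the combinatorial hypothesis on $\omega(G - E_R(G))$ bounds $\omega((V(G),F))$ in the correct direction. The inequality $\omega((V(G),F)) \le \omega(G-E_R(G))$ hinges on the observation that deleting $E_R(G)$ leaves only edges whose colors avoid $E(G)\setminus F$, all of which lie in $F$; getting this containment and the resulting component inequality to point the right way is the delicate part. An alternative to matroid intersection would be a direct augmenting-path argument building the tree color-class by color-class, but invoking Edmonds' intersection theorem keeps the bookkeeping minimal and isolates the entire difficulty into the single rank inequality above.
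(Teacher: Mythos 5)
Your proposal is correct, but it takes a genuinely different route from the source it would be compared against. Note that the paper itself never proves Theorem \ref{thm-Su06-1}: it quotes it from \cite{AkAl06} and \cite{Su06}, and the machinery it does use (Theorem \ref{thm-Su10-2.3}) contains this statement as the special case $w=1$, $f(c)=1$ for all $c$. Those proofs are elementary exchange arguments: starting from a heterochromatic spanning forest with the fewest components (equivalently, the most edges), one shows that if it is not a spanning tree, then the set $R$ of colors already used, together with the colors unusable for connecting components, produces a color set violating the inequality. Your sufficiency argument instead invokes Edmonds' matroid intersection theorem for the graphic matroid and the unit-capacity partition matroid induced by the coloring, and it checks out: the necessity direction is the same counting as everywhere else; the identification of common independent sets of size $n-1$ with heterochromatic spanning trees is right; the upper bound $\min \le r_1(E(G)) + r_2(\emptyset) = n-1$ correctly uses the connectivity obtained from $R=\emptyset$; and the key containment in the rank inequality --- every edge colored outside $R$ lies in $F$, so $G-E_R(G)$ is a spanning subgraph of $(V(G),F)$ and $\omega((V(G),F)) \le \omega(G-E_R(G)) \le |R|+1$ --- points the right way. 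What your approach buys is brevity and a conceptual explanation of why the condition is exactly the right one: it is the matroid intersection min-max formula specialized to this pair of matroids. What it costs is self-containedness (Edmonds' theorem is heavy machinery compared to the cited elementary proofs) and smoothness of generalization: for the $f$-chromatic, $w$-component version in Theorem \ref{thm-Su10-2.3}, the partition matroid must be given capacities $f(c)$, and then $r_2(E(G)\setminus F)$ is no longer simply $|R|$ for the natural choice of $R$, so the clean one-line translation between subsets $F$ of edges and subsets $R$ of colors that your write-up exploits requires extra care there.
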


Note that if $R=\emptyset$ then the condition is $\omega(G) \le 1$.
Thus, this condition includes a necessary and sufficient condition
for graphs to have a spanning tree,
namely, to be connected.
Suzuki \cite{Su06} proved the following theorem
by applying Theorem \ref{thm-Su06-1}.

\begin{theorem}[Suzuki, (2006) \cite{Su06}]
An edge-colored  complete graph $K_n$ has a heterochromatic spanning tree
if $|E_c(G)| \le n/2$ for any color $c \in \C$.
\label{thm-Su06-2}
\end{theorem}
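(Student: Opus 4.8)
The plan is to deduce Theorem~\ref{thm-Su06-2} directly from the necessary and sufficient condition of Theorem~\ref{thm-Su06-1}. So, assuming $|E_c(G)| \le n/2$ for every color $c \in \C$, it suffices to verify that $\omega(G - E_R(G)) \le |R| + 1$ holds for every $R \subseteq \C$. I would fix such an $R$ and write $\omega = \omega(G - E_R(G))$ for the number of components of the graph obtained from $K_n$ by deleting all edges whose color lies in $R$.

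The engine of the argument is a double count of the deleted edge set $E_R(G)$. On one hand, since each color $c \in R$ appears on at most $n/2$ edges,
\[
|E_R(G)| = \sum_{c \in R} |E_c(G)| \le |R| \cdot \frac{n}{2}.
\]
On the other hand, I would bound $|E_R(G)|$ from below by the number of edges of $K_n$ joining distinct components of $G - E_R(G)$: because $K_n$ is complete, every such crossing pair is an edge of $K_n$ that must have been removed, hence lies in $E_R(G)$. If $n_1, \ldots, n_\omega$ are the component sizes, this crossing number is exactly $\binom{n}{2} - \sum_i \binom{n_i}{2}$.

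The main technical point is to turn this into a bound depending only on $\omega$, by maximizing $\sum_i \binom{n_i}{2}$ subject to $\sum_i n_i = n$ over partitions into $\omega$ parts. By convexity of $x \mapsto \binom{x}{2}$ the maximum is attained at the most unbalanced partition, namely one component of size $n - \omega + 1$ together with $\omega - 1$ singletons, so that $\sum_i \binom{n_i}{2} \le \binom{n - \omega + 1}{2}$. This gives
\[
|E_R(G)| \ge \binom{n}{2} - \binom{n - \omega + 1}{2} = \frac{(\omega - 1)(2n - \omega)}{2}.
\]

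Finally I would combine the two estimates, obtaining $(\omega - 1)(2n - \omega) \le |R| \cdot n$. Since a graph on $n$ vertices has at most $n$ components we have $\omega \le n$, hence $2n - \omega \ge n$; substituting yields $(\omega - 1)n \le (\omega - 1)(2n - \omega) \le |R|\,n$, whence $\omega - 1 \le |R|$, that is $\omega(G - E_R(G)) \le |R| + 1$. As $R$ was arbitrary, Theorem~\ref{thm-Su06-1} then produces a heterochromatic spanning tree. I expect the only step requiring genuine care to be the extremal-partition estimate; the rest is routine arithmetic, and the boundary case $R = \emptyset$ (where the inequality reads $\omega \le 1$, i.e.\ $K_n$ is connected) is immediate.
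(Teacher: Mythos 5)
Your proposal is correct, and it takes exactly the route the paper attributes to this theorem: apply the necessary-and-sufficient condition of Theorem~\ref{thm-Su06-1} and verify it for each $R \subseteq \C$ by double-counting $E_R(G)$, bounding the within-component edges via the most unbalanced partition, $\sum_i \binom{n_i}{2} \le \binom{n-\omega+1}{2}$. This is also the same blueprint the paper itself uses to prove Theorem~\ref{thm-20110510}, where Lemma~\ref{lem-110519-1} plays precisely the role of your extremal-partition estimate (in bipartite form), so there is nothing to flag.
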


Jin and Li \cite{JiLi06} generalized Theorem \ref{thm-Su06-1}
to the following theorem,
from which we can obtain Theorem \ref{thm-Su06-1} by taking $k=n-1$.

\begin{theorem}[Jin and Li, (2006) \cite{JiLi06}]
An edge-colored  connected graph $G$ of order $n$
has a spanning tree with at least $k$ $(1 \le k \le n-1)$ colors
if and only if
\begin{equation*}
\omega(G-E_R(G)) \le n-k+|R| \text{~~~~~ for any } R \subseteq \C.
\end{equation*}
\label{thm-JiLi06-1}
\end{theorem}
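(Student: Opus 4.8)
My plan is to first trade the statement about spanning trees for an equivalent statement about forests, and then recognize the latter as a matroid intersection problem. Since $G$ is connected, any heterochromatic (rainbow) forest $F$ extends to a spanning tree of $G$, and that tree carries at least $|E(F)|$ colors; conversely, from a spanning tree with at least $k$ colors one picks a single edge of each color to obtain a heterochromatic forest with at least $k$ edges. Hence $G$ has a spanning tree with at least $k$ colors if and only if the maximum size of a heterochromatic forest in $G$ is at least $k$, and the whole theorem reduces to characterizing this latter condition.

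For necessity I would argue directly. Suppose $T$ is a spanning tree with at least $k$ colors and fix $R \subseteq \C$. The graph $T-E_R(T)$ is a forest on $n$ vertices, so $\omega(T-E_R(T)) = 1 + |E_R(T)|$. As $T$ carries at least $k$ colors and at most $|R|$ of them lie in $R$, at least $k-|R|$ of its edges avoid $R$ (there is nothing to prove when $|R|\ge k$), so $|E_R(T)| \le (n-1)-(k-|R|)$. Because $T-E_R(T)$ is a spanning subgraph of $G-E_R(G)$ and extra edges only merge components, $\omega(G-E_R(G)) \le \omega(T-E_R(T)) = 1+|E_R(T)| \le n-k+|R|$.

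For sufficiency I would phrase the existence of a large heterochromatic forest as a matroid intersection. On $E(G)$ take $M_1$, the graphic matroid, whose independent sets are the forests and whose rank is $r_1(A) = n - \omega\big((V(G),A)\big)$, and $M_2$, the partition matroid in which a set is independent iff it contains at most one edge of each color, so $r_2(A)$ equals the number of colors appearing in $A$. A heterochromatic forest is exactly a common independent set, so by Edmonds' classical matroid intersection theorem the maximum size of such a forest equals $\min_{A\subseteq E(G)}\big(r_1(A)+r_2(E(G)\setminus A)\big)$. The key simplification is that this minimum is attained at a set $A$ whose complement is closed in $M_2$: replacing $E(G)\setminus A$ by its $M_2$-closure leaves $r_2$ unchanged and can only shrink $A$, hence only decrease $r_1$. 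A closed set of the partition matroid is precisely a set $E_R(G)$ for some color set $R$, and for these $r_2(E_R(G)) = |R|$ while $r_1(E(G)\setminus E_R(G)) = n - \omega(G-E_R(G))$. Thus the maximum heterochromatic forest size equals $\min_{R\subseteq\C}\big(n+|R|-\omega(G-E_R(G))\big)$, which is at least $k$ exactly when $\omega(G-E_R(G)) \le n-k+|R|$ for every $R$. Combined with the reduction above, this settles both directions at once.

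The main obstacle is the sufficiency half, and specifically the bookkeeping that converts the abstract intersection minimum into the concrete quantity $\omega(G-E_R(G))$: one must justify restricting to color-closed sets and then read off the two rank values correctly. A more self-contained alternative would instead derive sufficiency from Theorem \ref{thm-Su06-1}, attaching an apex vertex joined to all of $V(G)$ and coloring the new edges so that a heterochromatic spanning tree of the enlarged graph is forced to use at most $n-k$ apex edges, leaving at least $k$ heterochromatic edges inside $G$; the difficulty there is choosing the apex coloring so that the necessary connections can always be made with distinct colors, which is itself a system-of-distinct-representatives condition, so I would prefer the matroid formulation.
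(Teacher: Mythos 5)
Your proof is correct, but note that the paper contains no proof of this statement to compare against: Theorem \ref{thm-JiLi06-1} is quoted from \cite{JiLi06}, and the paper only uses and rephrases it. Your argument is therefore a genuinely independent, self-contained route, and it is worth comparing with what the paper does do. Your opening reduction---a spanning tree with at least $k$ colors exists iff a heterochromatic forest with $k$ edges exists, by picking one edge per color in one direction and extending the forest to a spanning tree via connectivity in the other---is exactly the argument the paper gives when it passes from Theorem \ref{thm-JiLi06-1} to Theorem \ref{thm-JiLi06-2}, so that part is shared. Your sufficiency argument, however, is different in spirit from the elementary exchange/augmentation arguments behind Theorem \ref{thm-Su06-1} and Theorem \ref{thm-Su10-2.3} in the cited literature: you invoke Edmonds' matroid intersection theorem for the graphic matroid and the partition matroid induced by the coloring, then restrict the minimizing set to color-closed sets $E_R(G)$, and both restriction steps (closure preserves $r_2$, shrinking the complement only lowers $r_1$) are justified correctly. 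The heavier machinery buys more than the theorem asks for: you obtain the full min--max identity that the maximum size of a heterochromatic forest equals $\min_{R \subseteq \C}\bigl(n+|R|-\omega(G-E_R(G))\bigr)$, not merely the threshold statement, and the framework points directly at the paper's generalization, since replacing the partition matroid (capacity $1$ per color class) by the capacitated one with capacity $f(c)$ on class $c$ leads to the condition $\omega(G-E_R(G)) \le w + \sum_{c\in R}f(c)$ of Theorem \ref{thm-Su10-2.3}---though there the ``minimizer is color-closed'' step needs adjustment, because closed sets of the capacitated matroid are no longer just unions of whole color classes. One small slip to fix: $r_2(E_R(G)) = |R|$ holds only when every color of $R$ actually occurs in $G$; in general it is the number of colors of $R$ that occur. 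This is harmless, since dropping the non-occurring colors from $R$ changes neither $E_R(G)$ nor the desired inequality (the right-hand side $n-k+|R|$ only shrinks), but it should be stated.
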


If an edge-colored connected graph $G$ of order $n$
has a spanning tree with at least $k$ colors,
then
$G$ has a heterochromatic spanning forest with $k$ edges,
that is,
$G$ has a heterochromatic spanning forest with exactly $n-k$ components.
On the other hand,
If an edge-colored connected graph $G$ of order $n$
has a heterochromatic spanning forest with exactly $n-k$ components,
then we can construct a spanning tree with at least $k$ colors
by adding some $n-k-1$ edges to the forest.
Hence, we can rephrase Theorem \ref{thm-JiLi06-1} as the following.

\begin{theorem}[\cite{JiLi06}]
An edge-colored  connected graph $G$ of order $n$
has a heterochromatic spanning forest
with exactly $n-k$ components $(1 \le k \le n-1)$
if and only if
\begin{equation*}
\omega(G-E_R(G)) \le n-k+|R| \text{~~~~~ for any } R \subseteq \C.
\end{equation*}
\label{thm-JiLi06-2}
\end{theorem}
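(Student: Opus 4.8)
The plan is to derive this statement directly from Theorem \ref{thm-JiLi06-1} by establishing that, for a connected graph $G$ of order $n$, the existence of a spanning tree with at least $k$ colors is equivalent to the existence of a heterochromatic spanning forest with exactly $n-k$ components. Since the condition $\omega(G-E_R(G)) \le n-k+|R|$ is literally the same in both theorems, once this equivalence of the two structural properties is in hand, Theorem \ref{thm-JiLi06-2} follows immediately.

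For the first direction, I would start from a spanning tree $T$ with at least $k$ colors. By selecting, for $k$ of the colors that appear on $T$, a single edge of that color, I obtain a set of $k$ edges with pairwise distinct colors. As a subset of the edges of the tree $T$, this set is acyclic, so together with all $n$ vertices of $G$ it forms a heterochromatic forest spanning $V(G)$. A spanning forest on $n$ vertices with $k$ edges has exactly $n-k$ components, which gives the desired forest.

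For the converse, I would start from a heterochromatic spanning forest $F$ with exactly $n-k$ components. Because $F$ is a spanning forest on $n$ vertices, it has exactly $k$ edges, and, being heterochromatic, these carry $k$ distinct colors. Using the connectivity of $G$, I would add $n-k-1$ edges joining distinct components one at a time, reducing the number of components from $n-k$ down to $1$ and producing a spanning tree $T \supseteq F$. Since $T$ retains all $k$ colors of $F$, it has at least $k$ colors, as required.

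I do not expect a genuine obstacle here: the argument is essentially a bookkeeping translation between the number of colors on a spanning tree and the number of components of a spanning forest, resting only on the identity that for a forest the number of components equals the number of vertices minus the number of edges, together with the connectivity of $G$. The single point that requires a little care is verifying that each edge added in the converse direction can be chosen to merge two different components; but this is guaranteed precisely because $G$ is connected, so whenever more than one component remains there must exist an edge of $G$ between two of them.
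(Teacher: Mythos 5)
Your proposal is correct and is essentially the paper's own argument: the paper also derives Theorem \ref{thm-JiLi06-2} from Theorem \ref{thm-JiLi06-1} by noting that a spanning tree with at least $k$ colors yields a heterochromatic spanning forest with $k$ edges (hence $n-k$ components), and conversely that such a forest extends to a spanning tree with at least $k$ colors by adding $n-k-1$ edges using connectivity. Your write-up just makes explicit the bookkeeping (one edge per color, components merging one at a time) that the paper leaves implicit.
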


{\it Heterochromatic}
means that
any color appears at most {\it once}.
Suzuki \cite{Su10} generalized {\it once} to a mapping $f$
from a given color set $\C$ to the set of non-negative integers,
and introduced the following definition as a generalization
of heterochromatic graphs.

\begin{definition}
Let $f$ be a mapping from a given color set $\C$
to the set of non-negative integers.
An edge-colored graph $(G, \C, color)$ is said to be
\textit{$f$-chromatic}
if $|E_c(G)| \le f(c)$ for any color $c \in \C$. 
\label{def-Su10-2.1}
\end{definition}

Fig.\ref{fig-110405-1} shows an example
of an $f$-chromatic spanning tree of an edge-colored graph.
Let $\C = \{1,2,3,4,5,6,7 \}$ be a given color set of $7$ colors,
and a mapping $f$ is given as follows:
$f(1)=3$,
$f(2)=1$,
$f(3)=3$,
$f(4)=0$,
$f(5)=0$,
$f(6)=1$,
$f(7)=2$.
Then, the left edge-colored graph in Fig.\ref{fig-110405-1}
has the right graph as a subgraph.
It is a spanning tree
where each color $c$ appears at most $f(c)$ times.
Thus, it is an $f$-chromatic spanning tree.

\fig{0.85\textwidth}{!}{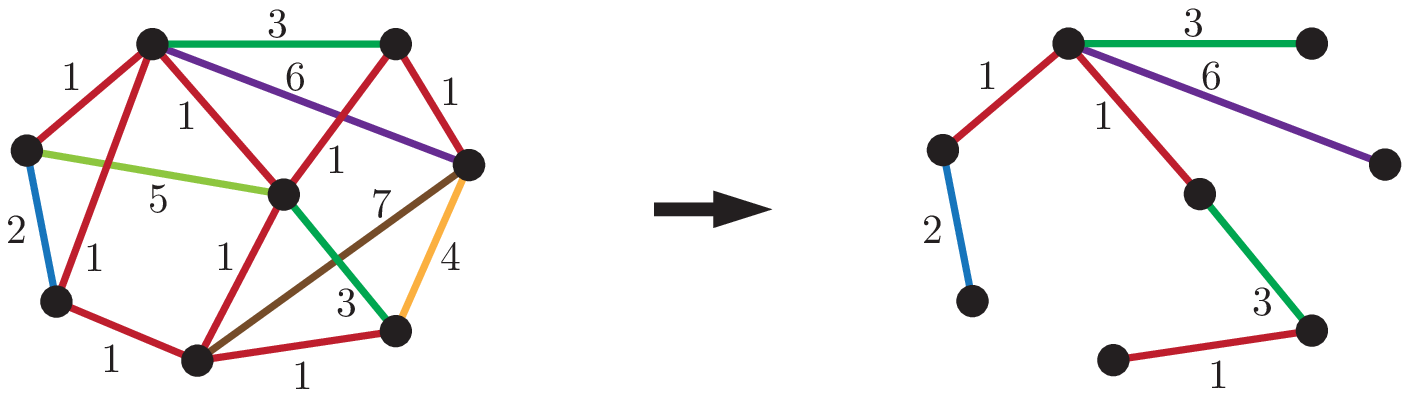}
{An $f$-chromatic spanning tree of an edge-colored graph.}
{fig-110405-1}

If $f(c) = 1$ for any color $c$,
then all $f$-chromatic graphs are heterochromatic
and also all heterochromatic graphs are $f$-chromatic.
It is expected many previous studies and results
for heterochromatic subgraphs
will be generalized.

Let $\C$ be a color set,
and $f$ be a mapping from $\C$ to the set of non-negative integers.
Suzuki \cite{Su10} presented the following necessary and sufficient condition
for graphs to have an $f$-chromatic spanning forest
with exactly $w$ components.
This is a generalization of Theorem \ref{thm-Su06-1} and Theorem \ref{thm-JiLi06-2}.

\begin{theorem}[Suzuki, (2010) \cite{Su10}]
An edge-colored graph $(G, \C, color)$ of order at least $w$
has an $f$-chromatic spanning forest with exactly $w$ components
if and only if
\begin{equation*}
\omega(G-E_R(G)) \le w+\sum_{c \in R}f(c) \text{~~~~~ for any } R \subseteq \C.
\end{equation*}
\label{thm-Su10-2.3}
\end{theorem}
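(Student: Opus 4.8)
The plan is to prove both directions, treating necessity (easy) and sufficiency (the substance) separately. Throughout I write $n=|V(G)|$, and for an edge set $A \subseteq E(G)$ I let $c(A)=\omega(V(G),A)$ denote the number of components of the spanning subgraph on edge set $A$ (so $c(\emptyset)=n$); note the hypothesis forces $n\ge w$, hence the target $n-w$ is nonnegative. For necessity, suppose $F$ is an $f$-chromatic spanning forest with exactly $w$ components, so $|E(F)|=n-w$. For any $R\subseteq\C$ the graph $F-E_R(F)$ is a spanning subgraph of $G-E_R(G)$, whence $\omega(G-E_R(G))\le\omega(F-E_R(F))$. Since $F$ is a forest, deleting its $|E_R(F)|$ edges of colors in $R$ raises the component count by exactly $|E_R(F)|$, and $f$-chromaticity gives $|E_R(F)|=\sum_{c\in R}|E_c(F)|\le\sum_{c\in R}f(c)$. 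Combining, $\omega(G-E_R(G))\le w+\sum_{c\in R}f(c)$, which is the stated condition.

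For sufficiency I would recognize an $f$-chromatic forest as a common independent set of two matroids on ground set $E(G)$: the graphic (cycle) matroid $M_1$, whose independent sets are the forests and whose rank is $r_1(A)=n-c(A)$, and the partition matroid $M_2$ obtained by partitioning $E(G)$ by color and capping color $c$ at $f(c)$, whose rank is $r_2(B)=\sum_{c}\min\{f(c),\,|B\cap E_c(G)|\}$. A set is simultaneously a forest and $f$-chromatic precisely when it is independent in both $M_1$ and $M_2$. By Edmonds' matroid intersection theorem the maximum size of such a common independent set equals $\min_{A\subseteq E(G)}\big(r_1(A)+r_2(E(G)\setminus A)\big)$, so it suffices to show this minimum is at least $n-w$; writing $B=E(G)\setminus A$ and substituting $r_1$, this is equivalent to proving $c(E(G)\setminus B)\le w+r_2(B)$ for every $B\subseteq E(G)$.

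The key step is to reduce this edge-set inequality to the hypothesis, which constrains only color-sets. Given $B$, set $R=\{c\in\C:\ |B\cap E_c(G)|>f(c)\}$ and $B'=B\setminus E_R(G)=\bigcup_{c\notin R}(B\cap E_c(G))$, so that $r_2(B)=\sum_{c\in R}f(c)+|B'|$ and $B\subseteq E_R(G)\cup B'$. Since $B\subseteq E_R(G)\cup B'$, fewer edges remain, so $c(E(G)\setminus B)\le c\big(E(G)\setminus(E_R(G)\cup B')\big)$; and as $E(G)\setminus(E_R(G)\cup B')$ arises from $E(G)\setminus E_R(G)$ by deleting the $|B'|$ edges of $B'$, which increases the component count by at most $|B'|$, the hypothesis $\omega(G-E_R(G))\le w+\sum_{c\in R}f(c)$ gives $c(E(G)\setminus B)\le\omega(G-E_R(G))+|B'|\le w+\sum_{c\in R}f(c)+|B'|=w+r_2(B)$, as wanted. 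Thus a maximum common independent set has at least $n-w$ edges; since every edge of a forest is a bridge, I can then delete edges one at a time, each deletion raising the number of components by one, trimming down to exactly $n-w$ edges. The resulting subgraph is still a forest and still $f$-chromatic, and a forest on $V(G)$ with $n-w$ edges has exactly $w$ components, i.e. it is the desired $f$-chromatic spanning forest.

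The main obstacle is precisely this reduction: converting the matroid-intersection optimality condition, which ranges over all edge subsets $B$, into the given condition, which ranges only over color subsets $R$. The delicate point is the bookkeeping of the truncation $\min\{f(c),\cdot\}$, handled above by charging each color either its full capacity $f(c)$ when $B$ oversaturates it, or its actual count $|B\cap E_c(G)|$ otherwise, and then absorbing the undercapacity edges $B'$ through the ``each deletion costs at most one component'' estimate. As a sanity check, specializing to $f\equiv 1$ recovers Theorems \ref{thm-Su06-1} and \ref{thm-JiLi06-2}, so any self-contained augmenting-path argument replacing the appeal to matroid intersection must reduce to their proofs in that case.
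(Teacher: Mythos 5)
Your proof is correct, but there is nothing in this paper to compare it against: Theorem \ref{thm-Su10-2.3} is stated without proof, imported from \cite{Su10}, and used as a black box to derive Theorem \ref{thm-20110510}. Relative to the cited source, your route is genuinely different. Suzuki's proof in \cite{Su10} is elementary and self-contained, in the style of his proof of Theorem \ref{thm-Su06-1}: it works directly with an extremal $f$-chromatic spanning forest and, assuming it has more than $w$ components, constructs a violating color set $R$ by an exchange-type argument, using nothing beyond basic graph theory. You instead identify $f$-chromatic forests as the common independent sets of the graphic matroid and the capacitated partition matroid with rank $r_2(B)=\sum_{c}\min\{f(c),|B\cap E_c(G)|\}$, invoke Edmonds' matroid intersection theorem, and then carry out the one piece of genuine work this approach demands: reducing the min-max, which quantifies over all edge sets $B$, to the hypothesis, which quantifies only over color sets $R$. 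Your reduction is exactly right --- charge each oversaturated color its full capacity $f(c)$ by discarding all of $E_R(G)$ at once and applying the hypothesis to that $R$, then charge each leftover edge of $B'$ at most one additional component --- and the final trimming to exactly $n-w$ edges is sound, since subsets of common independent sets are common independent sets and a spanning forest with $n-w$ edges has exactly $w$ components. Your necessity argument coincides with the standard one. As for what each approach buys: yours is short, conceptually transparent, situates the theorem as an instance of matroid intersection, and immediately suggests a polynomial-time algorithm for finding the forest, at the price of assuming Edmonds' theorem; the elementary argument of \cite{Su10} is longer but needs no matroid machinery at all.
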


By applying Theorem \ref{thm-Su10-2.3},
He generalized Theorem \ref{thm-Su06-2},
as follows.

\begin{theorem}[Suzuki, (2010) \cite{Su10}]
A $g$-chromatic graph $G$ of order $n$ with $|E(G)|>\binom{n-w}{2}$
has an $f$-chromatic spanning forest with exactly $w$ $(1 \le w \le n-1)$ components
if $g(c) \le \frac{|E(G)|}{n-w}f(c)$ for any color $c$.
\label{thm-Su10-2.5}
\end{theorem}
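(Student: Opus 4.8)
The plan is to verify the necessary and sufficient condition of Theorem \ref{thm-Su10-2.3} directly. That is, it suffices to show that for every color subset $R \subseteq \C$ we have $\omega(G - E_R(G)) \le w + \sum_{c \in R} f(c)$. So I would fix an arbitrary $R \subseteq \C$, write $m = |E(G)|$, $F = \sum_{c \in R} f(c)$, and $\omega = \omega(G - E_R(G))$, and aim to prove $\omega \le w + F$; applying Theorem \ref{thm-Su10-2.3} then finishes the argument.

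First I would dispose of the easy case $F \ge n - w$. Here $w + F \ge n \ge \omega$, since a graph on $n$ vertices has at most $n$ components, so the required inequality holds trivially. Thus I may assume $F < n - w$, equivalently $n - w - F > 0$; since $F$ is an integer this also gives $n - w - F \ge 1$, and this positivity is exactly what makes the later cancellation legitimate.

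For the main case I would count edges two ways. On one hand, since $G$ is $g$-chromatic, the number of deleted edges satisfies $|E_R(G)| = \sum_{c \in R} |E_c(G)| \le \sum_{c \in R} g(c)$, and the hypothesis $g(c) \le \frac{m}{n-w} f(c)$ then gives $|E_R(G)| \le \frac{m}{n-w} F$. Hence $G - E_R(G)$ retains at least $m - \frac{m}{n-w} F = m \cdot \frac{n - w - F}{n - w}$ edges. On the other hand, a simple graph on $n$ vertices with exactly $\omega$ components has at most $\binom{n - \omega + 1}{2}$ edges, the extremal configuration being a clique on $n - \omega + 1$ vertices together with $\omega - 1$ isolated vertices. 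Combining the two bounds,
\[
m \cdot \frac{n - w - F}{n - w} \le \binom{n - \omega + 1}{2}.
\]
Finally I would argue by contradiction: suppose $\omega \ge w + F + 1$ (all quantities are integers). Then $n - \omega + 1 \le n - w - F$, so monotonicity of the binomial gives $\binom{n-\omega+1}{2} \le \binom{n-w-F}{2}$. Feeding this into the displayed inequality and cancelling the positive factor $n - w - F$ yields $\frac{m}{n - w} \le \frac{n - w - F - 1}{2}$, i.e.\ $2m \le (n - w)(n - w - F - 1)$. But the hypothesis $m > \binom{n-w}{2}$ means $2m > (n-w)(n-w-1)$; together these force $n - w - 1 < n - w - F - 1$, that is $F < 0$, contradicting $F = \sum_{c\in R} f(c) \ge 0$. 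Hence $\omega \le w + F$.

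I expect the only real subtlety to be the bookkeeping of the case split and the direction of the inequalities: isolating the condition $F < n - w$ so that division by $n - w - F$ is valid, and making sure it is precisely the strict inequality $m > \binom{n-w}{2}$ that produces the final contradiction. The structural input, namely the maximum number of edges in a graph with a prescribed number of components, is standard, so no genuine obstacle arises there.
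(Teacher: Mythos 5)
Your proof is correct, and it takes essentially the approach the paper itself relies on: the statement is quoted from \cite{Su10} without proof here, but the paper's proof of its bipartite analogue, Theorem \ref{thm-20110510}, follows the identical template --- invoke Theorem \ref{thm-Su10-2.3}, bound $|E_R(G)|$ from above via the chromatic hypothesis, and contradict an extremal bound on the number of edges of a graph with many components. Your clique-plus-isolated-vertices bound $\binom{n-\omega+1}{2}$ is precisely the non-bipartite counterpart of the paper's Lemma \ref{lem-110519-1}, and your trivial case $\sum_{c \in R} f(c) \ge n-w$ plays the same role as the paper's separate treatment of $R=\C$.
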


In this paper,
we will generalize the following theorem
for edge-colored complete bipartite graphs.

\begin{theorem}[Brualdi and Hollingsworth, 2001 \cite{BrHo01}]
Let $G$ be an edge-colored balanced complete bipartite graph $K_{n,n}$
with a color set $\C = \{1,2,3,$ $\ldots,$ $2n-1\}$.
Let $e_c$ be the number of edges with a color $c$,
namely, $e_c = |E_c(G)|$,
and assume that $1 \le e_1 \le e_2 \le \cdots \le e_{2n-1}$.
If $\sum_{i=1}^{r} e_i > r^2/4$ for any color $r \in \C$,
then $G$ has a heterochromatic spanning tree.
\label{thm-BrHo01}
\end{theorem}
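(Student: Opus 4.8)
The plan is to deduce Theorem~\ref{thm-BrHo01} from Theorem~\ref{thm-Su10-2.3} by taking $w=1$ and $f(c)=1$ for every color $c\in\C$. With these choices an $f$-chromatic spanning forest with exactly one component is precisely a heterochromatic spanning tree, so it suffices to verify the condition
\begin{equation*}
\omega(G-E_R(G)) \le 1+|R| \qquad \text{for every } R \subseteq \C .
\end{equation*}
First I would dispose of the trivial cases $R=\emptyset$ (where $G=K_{n,n}$ is connected, so both sides behave as $1$) and $R=\C$ (where $G-E_R(G)$ is edgeless and both sides equal $2n$), and then treat the remaining cases by contradiction.

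So assume some $R$ with $|R|=r$, $1 \le r \le 2n-2$, violates the inequality, i.e.\ $H:=G-E_R(G)$ has $k:=\omega(H) \ge r+2$ components $C_1,\dots,C_k$. Writing $X,Y$ for the two color classes of $K_{n,n}$ and $a_j=|C_j\cap X|$, $b_j=|C_j\cap Y|$, we have $\sum_j a_j=\sum_j b_j=n$. Every edge of $K_{n,n}$ joining two different components is absent from $H$, hence lies in $E_R(G)$, and the number of such edges is $n^2-\sum_j a_jb_j$. Therefore
\begin{equation*}
\sum_{c\in R} e_c \;=\; |E_R(G)| \;\ge\; n^2-\sum_{j=1}^{k} a_jb_j ,
\end{equation*}
which rearranges to $\sum_j a_jb_j \ge \sum_{c\notin R} e_c$. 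Since $\C\setminus R$ has $2n-1-r$ colors and $e_1\le\cdots\le e_{2n-1}$, the right-hand side is at least the sum of the $2n-1-r$ smallest color classes, so by the hypothesis of the theorem (applied to the index $2n-1-r\ge 1$)
\begin{equation*}
\sum_{j=1}^{k} a_jb_j \;\ge\; \sum_{i=1}^{2n-1-r} e_i \;>\; \frac{(2n-1-r)^2}{4}.
\end{equation*}

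The heart of the argument is the opposite, purely combinatorial estimate: among all ways of splitting the $2n$ vertices into $k$ nonempty parts $(a_j,b_j)$ with $\sum_j a_j=\sum_j b_j=n$, the quantity $\sum_j a_jb_j$ is maximized by taking a single large part together with $k-1$ singletons distributed as evenly as possible between $X$ and $Y$. I would prove this by an exchange argument (if two parts each meet both $X$ and $Y$, one can shift mass to concentrate them without decreasing $\sum_j a_jb_j$), reducing to the one-variable optimization $\max_{p+q=k-1}(n-p)(n-q)=(n-\tfrac{k-1}{2})^2$. This yields
\begin{equation*}
\sum_{j=1}^{k} a_jb_j \;\le\; \frac{(2n+1-k)^2}{4}.
\end{equation*}
Combining the two displays gives $(2n+1-k)^2 > (2n-1-r)^2$; but $k\ge r+2$ forces $0 < 2n+1-k \le 2n-1-r$ and hence $(2n+1-k)^2 \le (2n-1-r)^2$, a contradiction. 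Thus the condition of Theorem~\ref{thm-Su10-2.3} holds for every $R$, and $G$ has a heterochromatic spanning tree.

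I expect the main obstacle to be the extremal estimate $\sum_j a_jb_j \le (2n+1-k)^2/4$: one must handle the nonemptiness constraint on the parts carefully, since it is exactly what produces the dependence on $k$, and verify that integrality of the $a_j,b_j$ only helps, so that the continuous bound $(2n+1-k)^2/4$ is legitimate as an upper bound. The remaining steps — translating ``all cross edges are removed'' into the inequality $\sum_{c\in R}e_c \ge n^2-\sum_j a_jb_j$ and the bookkeeping of the sorted partial sums $\sum_{i=1}^{2n-1-r}e_i$ — are routine once the reduction to Theorem~\ref{thm-Su10-2.3} is in place.
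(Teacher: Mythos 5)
Your proposal is correct and follows essentially the same route as the paper: the paper likewise deduces this theorem from Theorem~\ref{thm-Su10-2.3} by contradiction (there in the more general form of Theorem~\ref{thm-20110510}, after first recasting the sorted-partial-sum hypothesis as $|E_R(G)|>|R|^2/4$ for every nonempty $R\subseteq\C$), and your extremal estimate $\sum_j a_jb_j \le (2n+1-k)^2/4$ is exactly the paper's Lemma~\ref{lem-110519-1} with $N=2n$ and $s=k$, proved there by a closely related exchange/maximality argument. The only differences are cosmetic: you specialize to $m=n$, $w=1$, $f\equiv 1$ from the start and handle the sorted partial sums directly, rather than proving the general theorem and then specializing.
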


\fig{\textwidth}{!}{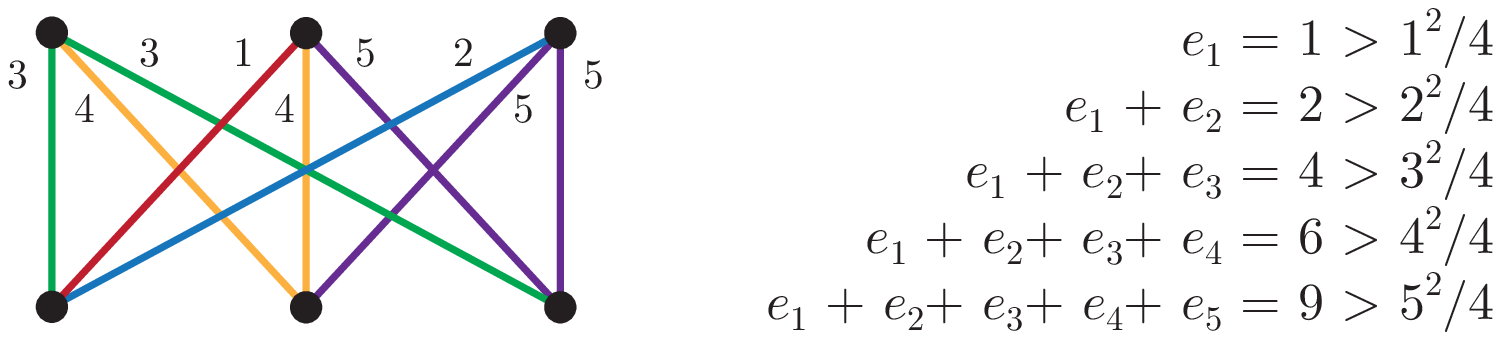}
{An example of Theorem \ref{thm-BrHo01}.}
{fig-110406-1}

Fig.\ref{fig-110406-1} shows an example of Theorem \ref{thm-BrHo01}.
The sum of numbers of edges with $1, 2, \ldots, r$ is more than $r^2/4$
for any color $r$, thus, this graph has a heterochromatic spanning tree.

In the next sections,
we show a generalization of this theorem
and prove it by applying Theorem \ref{thm-Su10-2.3}.

\section{A generalization of Brualdi-Hollingsworth Theorem}

Under the conditions of Theorem \ref{thm-BrHo01},
for any non-empty subset $R \subseteq \C$,
$|E_R(G)| \ge \sum_{i=1}^{|R|} e_i > |R|^2/4$.
On the other hand,
If $|E_R(G)| > |R|^2/4$ for any non-empty subset $R \subseteq \C$,
then
\begin{equation*}
\sum_{i=1}^{r} e_i = \sum_{i=1}^{r} |E_i(G)| = |E_Q(G)| > |Q|^2/4 = r^2/4,
\end{equation*}
for any color $r$ and color subset $Q=\{1,2,3, \ldots , r \} \subseteq \C$.
Thus,
$\sum_{i=1}^{r} e_i > r^2/4$ for any color $r \in \C$
if and only if
$|E_R(G)| > |R|^2/4$ for any non-empty subset $R \subseteq \C$.
Hence, Theorem \ref{thm-BrHo01} implies as follows.

\begin{theorem}[Brualdi and Hollingsworth, 2001 \cite{BrHo01}]
Let $G$ be an edge-colored balanced complete bipartite graph $K_{n,n}$
with a color set $\C = \{1,2,3,$ $\ldots,$ $2n-1\}$.
If $|E_R(G)| > |R|^2/4$ for any non-empty subset $R \subseteq \C$,
then $G$ has a heterochromatic spanning tree.
\label{thm-BrHo01+}
\end{theorem}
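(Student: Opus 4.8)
The plan is to verify the connectivity criterion of Theorem \ref{thm-Su06-1} (equivalently Theorem \ref{thm-Su10-2.3} with $f \equiv 1$ and $w = 1$): it suffices to prove that $\omega(G - E_R(G)) \le |R| + 1$ for every $R \subseteq \C$. Write $r = |R|$, let $C_1, \ldots, C_\omega$ be the components of $G - E_R(G)$ with $\omega = \omega(G - E_R(G))$, and suppose $C_i$ meets the two sides of the bipartition in $a_i$ and $b_i$ vertices, so that $\sum_i a_i = \sum_i b_i = n$. The case $R = \C$ is immediate, since then $G - E_R(G)$ is edgeless and $\omega = 2n = |\C| + 1$. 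So I would assume $R \subsetneq \C$ and set $\overline{R} = \C \setminus R \ne \emptyset$.

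The key observation is that every edge joining two distinct components must have been removed, hence is colored from $R$; therefore the edges surviving in $G - E_R(G)$ are exactly the edges colored from $\overline{R}$, and each lies inside a single component. Since the edges inside $C_i$ form a subgraph of $K_{a_i, b_i}$, this yields
\begin{equation*}
|E_{\overline{R}}(G)| \le \sum_{i=1}^{\omega} a_i b_i .
\end{equation*}
Applying the hypothesis to the non-empty set $\overline{R}$ then gives the lower bound $|E_{\overline{R}}(G)| > |\overline{R}|^2/4 = (2n - 1 - r)^2/4$.

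The technical heart of the argument, and the step I expect to be the main obstacle, is a tight upper bound on the number of within-component pairs:
\begin{equation*}
\sum_{i=1}^{\omega} a_i b_i \le \left( \frac{2n + 1 - \omega}{2} \right)^{2}.
\end{equation*}
I would establish this by an exchange argument: in a configuration maximizing $\sum_i a_i b_i$ one may assume that at most one component contains vertices from both sides, since moving all the same-side vertices of one ``mixed'' component into another mixed component with at least as many opposite-side vertices never decreases the sum while keeping every component non-empty. The $\omega - 1$ remaining pure components contribute nothing and occupy at least $\omega - 1$ vertices, so the unique mixed component satisfies $a^* + b^* \le 2n - (\omega - 1)$, and AM-GM gives $a^* b^* \le (a^* + b^*)^2/4 \le (2n + 1 - \omega)^2/4$. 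Pinning down the extremal configuration with the exact constant is what makes the final inequality just barely close, so this extremal analysis must be carried out carefully.

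Combining the three displays gives $(2n - 1 - r)^2/4 < (2n + 1 - \omega)^2/4$; both bases are positive (as $r \le 2n - 2$ and $\omega \le 2n$), so this forces $2n - 1 - r < 2n + 1 - \omega$, i.e.\ $\omega < r + 2$, whence $\omega \le r + 1 = |R| + 1$, as required. The conceptually crucial move is passing from $R$ to its complement $\overline{R}$: bounding the edges deleted by $E_R$ directly yields only a \emph{lower} bound on $|E_R(G)|$, which is useless for capping $\omega$, whereas counting the surviving $\overline{R}$-colored edges converts the hypothesis into exactly the leverage needed.
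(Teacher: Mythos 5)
Your proposal is correct and takes essentially the same route as the paper: the paper proves the general Theorem~\ref{thm-20110510} by applying the spanning-forest criterion (Theorem~\ref{thm-Su10-2.3}, of which Theorem~\ref{thm-Su06-1} is the $f \equiv 1$, $w=1$ case), passing to the complementary color set $\C \setminus R$ exactly as you do, and invoking Lemma~\ref{lem-110519-1} --- which is precisely your bound $\sum_i a_i b_i \le (2n+1-\omega)^2/4$, proved there by the same kind of vertex-moving exchange argument you sketch. Your argument is simply the $m=n$, $w=1$, $f \equiv 1$ specialization of the paper's proof, so there is no substantive difference to report.
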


In this paper, we generalize this to the following.

\begin{theorem}
Let $G$ be an edge-colored complete bipartite graph $K_{n,m}$
with a color set $\C$.
Let $w$ be a positive integer with $1 \le w \le n+m$,
and $f$ be a function from $\C$ to the set of non-negative integers
such that $\sum_{c \in \C} f(c) \ge n+m-w$.
If $|E_R(G)| > (n+m-w-\sum_{c \in \C \setminus R} f(c))^2/4$
for any non-empty subset $R \subseteq \C$,
then $G$ has an $f$-chromatic spanning forest with $w$ components.
\label{thm-20110510}
\end{theorem}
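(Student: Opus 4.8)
The plan is to deduce the theorem from Suzuki's necessary and sufficient condition, Theorem~\ref{thm-Su10-2.3}. Since $G=K_{n,m}$ has order $n+m\ge w$, that theorem reduces the problem to verifying
\begin{equation*}
\omega(G-E_R(G))\le w+\sum_{c\in R}f(c)\qquad\text{for every }R\subseteq\C .
\end{equation*}
I would proceed by contradiction, assuming this fails for some $R$ and setting $t=\omega(G-E_R(G))\ge w+\sum_{c\in R}f(c)+1$. The extreme case $R=\C$ is handled directly: then $G-E_R(G)$ is edgeless, so $t=n+m$, and the desired inequality is exactly the standing assumption $\sum_{c\in\C}f(c)\ge n+m-w$; hence any offending $R$ must satisfy $R\ne\C$, so its complement $S=\C\setminus R$ is non-empty. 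The conceptual key --- and the step I think is easiest to miss --- is that the contradiction will come from feeding $S$, not $R$, into the hypothesis.

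Let $C_1,\dots,C_t$ be the components of $H:=G-E_R(G)$, and let $a_i,b_i$ count the vertices of $C_i$ in the two parts of $K_{n,m}$, so $\sum_i a_i=n$ and $\sum_i b_i=m$. Every edge of $H$ lies inside a single component, so $H$ restricted to $C_i$ is bipartite with parts of sizes $a_i,b_i$, giving $|E(H)|\le\sum_i a_ib_i$. The technical core is the purely combinatorial bound
\begin{equation*}
\sum_{i=1}^{t}a_ib_i\le\left(\frac{n+m-t+1}{2}\right)^2 .
\end{equation*}
I would prove it by maximizing $\sum_i a_ib_i$ over all partitions of the $n+m$ vertices into $t$ non-empty parts. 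If two parts $C_i,C_j$ each meet both sides, replacing them by their union together with one new singleton keeps the number of parts equal and changes $\sum_i a_ib_i$ by $a_i(b_j-1)+a_j(b_i-1)\ge 0$; iterating leaves at most one part meeting both sides, all others singletons contributing $0$. Then $\sum_i a_ib_i=a_1b_1\le\left((a_1+b_1)/2\right)^2$ by AM--GM, and $a_1+b_1\le n+m-(t-1)$ gives the claim.

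To finish, note that the edges of $H$ are exactly those edges of $G$ whose colors lie in $S$, so $|E_S(G)|=|E(H)|\le\sum_i a_ib_i$. Since $t\le n+m$ we have $n+m-t+1>0$, and the failure assumption $t\ge w+\sum_{c\in R}f(c)+1$ yields $0<n+m-t+1\le n+m-w-\sum_{c\in R}f(c)$; squaring this (valid as both sides are positive) and combining with the two displayed bounds gives
\begin{equation*}
|E_S(G)|\le\left(\frac{\,n+m-w-\sum_{c\in R}f(c)\,}{2}\right)^2
=\frac{\bigl(n+m-w-\sum_{c\in\C\setminus S}f(c)\bigr)^2}{4},
\end{equation*}
using $\C\setminus S=R$. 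As $S\ne\emptyset$, this contradicts the hypothesis applied to $S$, and the proof is complete. I expect the main obstacle to be the combinatorial maximization above, along with the bookkeeping needed to guarantee $n+m-t+1>0$ so that squaring is legitimate; everything else is a direct application of Theorem~\ref{thm-Su10-2.3}.
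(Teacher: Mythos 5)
Your proposal is correct and follows essentially the same route as the paper's own proof: reduce to Theorem~\ref{thm-Su10-2.3}, argue by contradiction, rule out $R=\C$ via the assumption $\sum_{c\in\C}f(c)\ge n+m-w$, apply the hypothesis to the non-empty complement $S=\C\setminus R$, and contradict it with the bound $|E_S(G)|\le (n+m-t+1)^2/4$, which is exactly the paper's Lemma~\ref{lem-110519-1}. The only (minor) difference is how that bound is established --- your merge-two-parts-and-split-off-a-singleton exchange plus AM--GM, versus the paper's extremal-graph argument that moves single vertices into a maximum component and then balances its partite sets --- but these are the same idea in different clothing.
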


Theorem \ref{thm-BrHo01+} is a special case
of Theorem \ref{thm-20110510}
with $m=n$, $w=1$, $f(c)=1$ for any color $c$, and $|\C|=2n-1$.

The number of edges of a spanning forest
with $w$ components of $K_{n,m}$ is $n+m-w$.
Thus, in Theorem \ref{thm-20110510},
the condition $\sum_{c \in \C} f(c) \ge n+m-w$ is necessary
for existence of an $f$-chromatic spanning forest with $w$ components.

In Theorem \ref{thm-20110510},
the lower bound of $|E_R(G)|$ is sharp
as follows:
Let $R \subseteq \C$ be a color subset
and $p = n+m-w-\sum_{c \in \C \setminus R} f(c)$.
Let $H$ be a complete bipartite subgraph $K_{\frac{p}{2},\frac{p}{2}}$ of $G$.
Color the edges in $E(H)$ with colors in $R$,
and the edges in $E(G) \setminus E(H)$ with colors in $\C \setminus R$.
Then, $|E_R(G)| = p^2/4 = (n+m-w-\sum_{c \in \C \setminus R} f(c))^2/4$.
(See Figure \ref{fig-110510-1}.)

\fig{0.75\textwidth}{!}{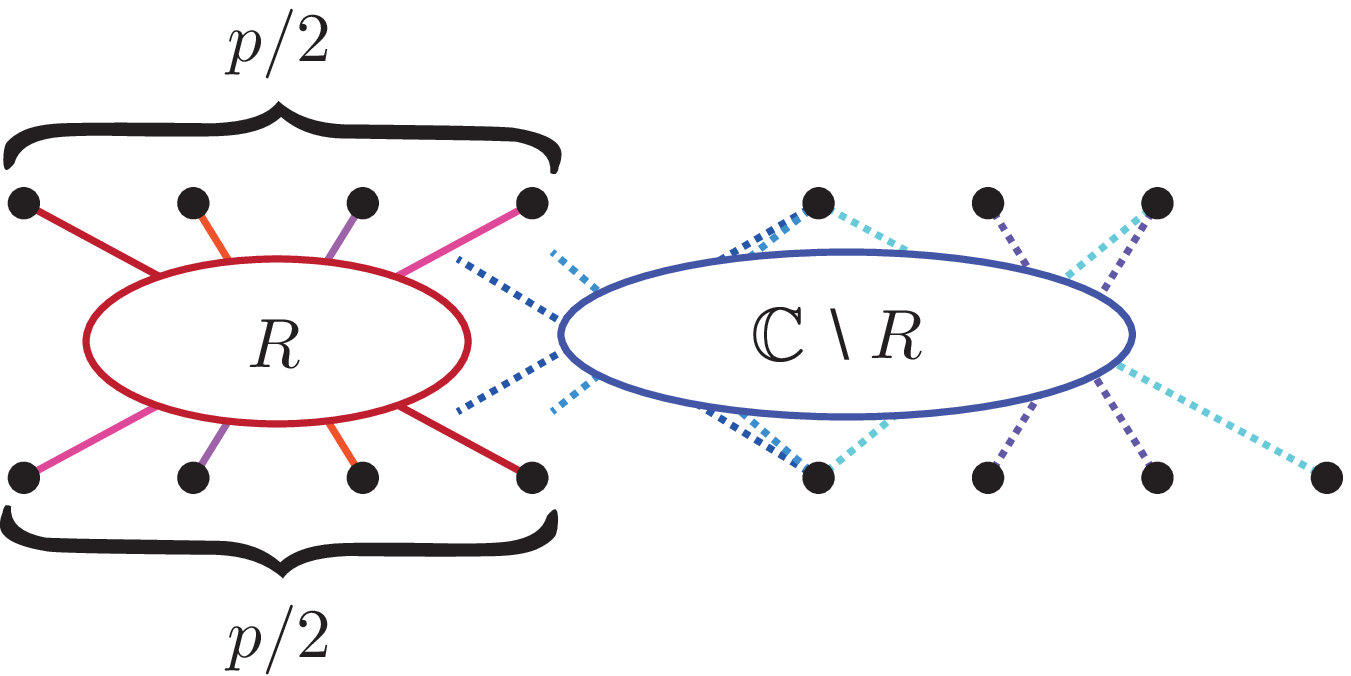}
{A graph $G$ and $R \subseteq \C$
with $|E_R(G)| = (n+m-w-\sum_{c \in \C \setminus R} f(c))^2/4$.}
{fig-110510-1}

Recall that $n+m-w$ is the number of edges of a spanning forest
with $w$ components of $G$,
and $\sum_{c \in \C \setminus R} f(c)$
is the maximum number of edges with colors in $\C \setminus R$ of a desired forest.
Thus, $p$ is the number of edges with colors in $R$ needed in a desired forest.
However, any $p$ edges of $H$ include a cycle because $|V(H)| = p$.
Hence, $G$ has no $f$-chromatic spanning forests with $w$ components,
which implies the lower bound of $|E_R(G)|$ is sharp.

In the next section,
we prove Theorem \ref{thm-20110510}
by applying Theorem \ref{thm-Su10-2.3}.
In order to prove it,
we need the following lemma.

\begin{lemma}
Let $G$ be a bipartite graph of order $N$
that consists of $s$ components.
Then $|E(G)| \le (N-(s-1))^2/4$.
\label{lem-110519-1}
\end{lemma}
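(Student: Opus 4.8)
The plan is to bound the edges component by component and then optimize over the possible component sizes. Write the $s$ components of $G$ as $C_1, \dots, C_s$ with orders $N_1, \dots, N_s$, so that $N_i \ge 1$ and $\sum_{i=1}^s N_i = N$. Since each $C_i$ is bipartite, its two colour classes have sizes summing to $N_i$, and the number of edges is at most the product of these sizes, which is maximized by the balanced split. Hence $|E(C_i)| \le \lfloor N_i^2/4 \rfloor$, and summing gives $|E(G)| \le \sum_{i=1}^s \lfloor N_i^2/4 \rfloor$. It therefore suffices to prove the purely arithmetic inequality $\sum_{i=1}^s \lfloor N_i^2/4 \rfloor \le (N-(s-1))^2/4$ for every partition of $N$ into exactly $s$ positive integers.

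Set $g(n) = \lfloor n^2/4 \rfloor$ and note the crucial fact $g(1) = 0$: an isolated vertex carries no edge, which is precisely what produces the saving of $s-1$ in the base. The heart of the argument is a merging inequality: for integers $a, b \ge 2$ one has $g(a+b-1) \ge g(a) + g(b)$. This reduces to checking $(a+b-1)^2/4 > \lfloor a^2/4\rfloor + \lfloor b^2/4\rfloor$, and expanding $(a+b-1)^2 - a^2 - b^2 = 2(a-1)(b-1) - 1 \ge 1$ for $a,b \ge 2$ settles it, since a real number strictly exceeding an integer has floor at least that integer.

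With the merging inequality in hand, I would argue by iteration. As long as two of the parts $N_i, N_j$ are both at least $2$, I replace them by the single part $N_i + N_j - 1$ together with one new part equal to $1$; this keeps the number of parts equal to $s$, keeps the total equal to $N$, and by the merging inequality does not decrease $\sum_i g(N_i)$. Each such step strictly reduces the number of parts that exceed $1$, so after finitely many steps at most one part exceeds $1$, leaving the extreme partition $(N-(s-1), 1, \dots, 1)$. For it, $\sum_i g(N_i) = g(N-(s-1)) + (s-1)g(1) = g(N-(s-1)) \le (N-(s-1))^2/4$, and chaining the inequalities yields the claim.

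I expect the only delicate point to be the bookkeeping around the floor functions. The naive bound $|E(C_i)| \le N_i^2/4$ summed over components only gives $N^2/4$, which is far too weak; the improvement to $(N-(s-1))^2/4$ comes entirely from exploiting that singleton components contribute nothing, so care is needed to keep the integer part tight rather than relaxing each term to $N_i^2/4$ prematurely. Verifying that the merging step respects both constraints (fixed number of parts and fixed total) and terminates at the stated extreme partition is the main thing to get right.
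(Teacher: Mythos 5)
Your proof is correct, and it reaches the bound by a genuinely different route from the paper's. The paper runs an extremal argument at the level of graphs: it takes a graph $G^*$ with $N$ vertices and $s$ components maximizing $|E(G^*)|$ and, subject to that, the order of the largest component $D_s$; maximality forces every component to be complete bipartite, a vertex-transfer argument (moving a vertex of some other nontrivial component into $D_s$) forces all components except $D_s$ to be singletons, and a rebalancing argument forces the partite sets of $D_s$ to differ by at most one, giving $|E(G^*)| = \lfloor (N-s+1)/2\rfloor\,\lceil (N-s+1)/2\rceil \le (N-s+1)^2/4$. You instead dispose of the graph theory in one line---each component of order $N_i$ has at most $\lfloor N_i^2/4\rfloor$ edges---and reduce everything to the arithmetic inequality $\sum_i \lfloor N_i^2/4\rfloor \le (N-(s-1))^2/4$ over partitions $N_1+\cdots+N_s=N$ into positive parts, which you prove via the merging inequality $\lfloor (a+b-1)^2/4\rfloor \ge \lfloor a^2/4\rfloor+\lfloor b^2/4\rfloor$ for $a,b\ge 2$ (your expansion $(a+b-1)^2-a^2-b^2=2(a-1)(b-1)-1\ge 1$ is right, and the floor bookkeeping---an integer strictly below a real is at most its floor---is handled correctly) together with an iteration that preserves the number of parts and the total and terminates at the partition $(N-(s-1),1,\ldots,1)$. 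Your merging step is the arithmetic counterpart of the paper's vertex-transfer step, so the underlying extremal intuition is shared; what your route buys is that it needs no two-level maximality and no structural claims about extremal graphs (complete bipartite components, balanced sides), while the paper's route buys an explicit description of the extremal configuration. Either proof slots into the proof of Theorem \ref{thm-20110510} without change.
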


\begin{proof}
Take a bipartite graph $G^*$ of order $N$
that consists of $s$ components
so that
\begin{enumerate}
\renewcommand{\labelenumi}{(\arabic{enumi})}
\item $|E(G^*)|$ is maximum, and
\item subject to (1), for the maximum component $D_s$ of $G^*$,
$|V(D_s)|$ is maximum.
\end{enumerate}

By the maximality (1) of $G^*$,
each component of $G^*$ is a complete bipartite graph.
Let $A_s$ and $B_s$ be the partite sets of $D_s$.
We assume $|A_s| \le |B_s|$.

Suppose that
some component $D$ except $D_s$ has at least two vertices.
Let $A$ and $B$ be the partite sets of $D$.
We assume $|A| \le |B|$.
If $|A| > |B_s|$
then $|A_s| \le |B_s| < |A| \le |B|$,
which contradicts that $D_s$ is a maximum component of $G^*$.
Thus, we have $|A| \le |B_s|$.

Let $x$ be a vertex of $B$, where $\deg_G(x) = |A|$.
Let $D' = D-\{ x \}$, $A' = A$, $B' = B-x$,
$A'_s = A_s \cup \{ x \}$, $B'_s = B_s$,
and
$D'_s = (A'_s \cup B'_s, E(D_s) \cup \{xz ~|~ z \in B'_s \})$.
Let $G'^*$ be the resulted graph.
Then, we have
\begin{eqnarray*}
|E(D')|+|E(D'_s)| &=& |E(D)| - \deg_G(x) + |E(D_s)| + |B'_s|\\
                            &=& |E(D)| + |E(D_s)| + |B_s| - |A|\\
                            & \ge & |E(D)| + |E(D_s)|,
\end{eqnarray*}
which implies $|E(G'^*)|=|E(G^*)|$ by the condition (1).
However, that contradicts the maximality (2)
because $|D'_s| \ge |D_s| + 1$.
Hence, every component except $D_s$ has exactly one vertex,
which implies that
$|V(D_s)|=N-(s-1)$.

Suppose that $|B_s|-|A_s| \ge 2$.
Let $x$ be a vertex of $B_s$, where $\deg_G(x) = |A_s|$.
Let $D'_s = (V(D_s), E(D_s - x) \cup \{xz ~|~ z \in B_s -x \})$.
Then, $D'_s$ is a complete bipartite graph, and we have
\begin{eqnarray*}
|E(D'_s)| &=& |E(D_s)| - \deg_G(x) + |B_s - x|\\
               &=& |E(D_s)| - |A_s| + |B_s| - 1\\
               & \ge & |E(D_s)| + 1,
\end{eqnarray*}
which contradicts the maximality (1).
Hence, $|B_s|-|A_s| \le 1$.

Therefore,
\begin{eqnarray*}
|E(G)| & \le & |E(G^*)| = |E(D_s)| = |A_s||B_s| \\
           & = & \lfloor (N-(s-1))/2 \rfloor \lceil (N-(s-1))/2 \rceil \\
           & \le & (N-(s-1))^2/4.
\end{eqnarray*}
\end{proof}

\section{Proof of Theorem \ref{thm-20110510}}

Suppose that
$G$ has no $f$-chromatic spanning forests with $w$ components.
By Theorem \ref{thm-Su10-2.3},
there exists a color set $R \subseteq \C$
such that
\begin{equation}
\omega(G-E_R(G)) > w+\sum_{c \in R}f(c).
\label{eq-110519-1}
\end{equation}

Let $s = \omega(G-E_R(G))$.
Let $D_1, D_2, \ldots, D_s$ be the components of $G-E_R(G)$,
and $q$ be the number of edges of $G$ between these distinct components.
Note that,
the colors of these $q$ edges are only in $R$.

If $R=\C$ then
\begin{equation*}
s = \omega(G-E_{\C}(G)) > w+\sum_{c \in \C}f(c)
\ge w+n+m-w=n+m=|V(G)|.
\end{equation*}
by the assumption of Theorem \ref{thm-20110510}.
This contradicts that $s \le |V(G)|$.
Thus,
we can assume $R \ne \C$, namely, $\C \setminus R \ne \emptyset$.
Hence,
by the assumption of Theorem \ref{thm-20110510},
\begin{equation*}
|E_{\C \setminus R}(G)|
> (n+m-w-\sum_{c \in \C \setminus (\C \setminus R)} f(c))^2/4
= (n+m-w-\sum_{c \in R} f(c))^2/4.
\end{equation*}

Therefore, we have
\begin{equation}
q \le |E_R(G)| = |E(G)| - |E_{\C \setminus R}(G)|
<|E(G)| - (n+m-w-\sum_{c \in R} f(c))^2/4.
\label{eq-110519-2}
\end{equation}

On the other hand,
\begin{eqnarray*}
q &=& |E(G)| - |E(D_1) \cup E(D_2) \cup \cdots \cup E(D_s)|.
\end{eqnarray*}
By Lemma \ref{lem-110519-1},
$|E(D_1) \cup E(D_2) \cup \cdots \cup E(D_s)| \le (n+m-(s-1))^2/4$.
Thus,
since $s = \omega(G-E_R(G)) \ge w+1+\sum_{c \in R}f(c)$
by (\ref{eq-110519-1}),
we have
\begin{eqnarray*}
q &=& |E(G)| - |E(D_1) \cup E(D_2) \cup \cdots \cup E(D_s)|\\
  &\ge & |E(G)| - (n+m-(s-1))^2/4\\
  &\ge & |E(G)| - (n+m-(w+1+\sum_{c \in R}f(c)-1))^2/4\\
  &=& |E(G)| - (n+m-w-\sum_{c \in R} f(c))^2/4,
\end{eqnarray*}
which contradicts (\ref{eq-110519-2}).
Consequently,
the graph $G$ has an $f$-chromatic spanning forest
with $w$ components.



\end{document}